\documentclass[12pt]{amsart}
\usepackage{amscd}
\usepackage{amssymb}
\newcommand{\be}{\begin{equation}}
\newcommand{\ee}{\end{equation}}
\newcommand{\ba}{\begin{eqnarray}}
\newcommand{\ea}{\end{eqnarray}}
\newcommand{\ban}{\begin{eqnarray*}}
	\newcommand{\ean}{\end{eqnarray*}}

\def\XXint#1#2#3{{\setbox0=\hbox{$#1{#2#3}{\int}$}
		\vcenter{\hbox{$#2#3$}}\kern-.5\wd0}}

\newcommand{\Rk}{\noindent {\bf Remark} }

\newtheorem{theo}{Theorem}[section]

\begin{document}
	\newtheorem{defn}[theo]{Definition}
	\newtheorem{ques}[theo]{Question}
	\newtheorem{lem}[theo]{Lemma}
	\newtheorem{prop}[theo]{Proposition}
	\newtheorem{coro}[theo]{Corollary}
	\newtheorem{ex}[theo]{Example}
	\newtheorem{note}[theo]{Note}
	\newtheorem{conj}[theo]{Conjecture}
	\makeatletter
	\@namedef{subjclassname@2020}{%
		\textup{2020} Mathematics Subject Classification}
	\makeatother

	\title[Ricci Shrinker with CSC]{a note on four dimensional  Shrinking Gradient Ricci Solitons with Constant Scalar Curvature }
	\author{Chen Wang}
	\address[Chen Wang]
	{School of Science, Zhejiang Sci-Tech University, Hangzhou 310018, China}
\email{2023210103029@mails.zstu.edu.cn}	
	\author{Guoqiang Wu}
	\address[Guoqiang Wu]
	{School of Science, Zhejiang Sci-Tech University, Hangzhou 310018, China}
	\email{gqwu@zstu.edu.cn}

	\subjclass[2020]{53C21; 53E20}
	
	\keywords{Ricci soliton, Constant scalar curvature, Weighted Laplacian}
	\date{}
	\maketitle

	\begin{abstract} Let $(M^4, g, f)$ be a four-dimensional complete noncompact gradient shrinking Ricci soliton with the equation $Ric+\nabla^2f= \frac{1}{2}g$.  If its scalar curvature is $1$,  Cheng-Zhou \cite{Cheng-Zhou} proved that it is a finite quotient of $\mathbb{R}^2\times \mathbb{S}^2$.
 In this  note we present an alternative proof by analyzing the asymptotic geometry at infinity.
	\end{abstract}
	
	\section{Introduction}
	Let  $(M^n, g)$ be an $n$-dimensional complete gradient Ricci soliton with the potential function $f$ satisfying
	\begin{align}\label{soliton}
	\text{Ric}+\nabla^2f=\lambda g
	\end{align}
	for some constant $\lambda$, where $\text{Ric}$ is the Ricci tensor of $g$ and $\nabla^2f$ denotes the Hessian of the potential function $f$.
	The Ricci soliton is said to be shrinking, steady, or expanding accordingly as $\lambda$ is positive, zero, or negative, respectively.
	\smallskip	
	
	A gradient Ricci soliton is a self-similar solution to the Ricci flow which flows by diffeomorphism and homothety. The study of solitons has become increasingly important in both the study of the Ricci flow introduced by Hamilton \cite{Hamilton} and metric measure theory. Solitons play a direct role as singularity dilations in the Ricci flow proof of uniformization. In \cite{Perelman1}, Perelman introduced the ancient $\kappa$-solutions, which
	play an important role in the singularity analysis, and he also proved that suitable blow down limit of ancient $\kappa$-solutions must be a shrinking gradient Ricci soliton. In \cite{Perelman2}, Perelman proved that any two dimensional non-flat ancient $\kappa$-soluition must be the standard $\mathbb S^2$, and he also classified three-dimensional shrinking gradient Ricci soliton under the assumption of nonnegative curvature and $\kappa$-noncollapseness. Due to the work of Perelman \cite{Perelman2}, Ni-Wallach \cite{Ni-Wallach}, Cao-Chen-Zhu \cite{Cao-Chen-Zhu}, the classification of three-dimensional shrinking gradient Ricci soliton is complete.

	\smallskip
	
	One particularly important case is shrinking gradient Ricci soliton with constant scalar curvature. About this direction, Professor Huai-Dong Cao conjectured that Ricci shrinker has constant scalar curvature if and only if it is isometric to a finite quotient of $\mathbb{R}^{n-k}\times \mathbb{N}^{k}$,  where $\mathbb{N}^k$ is an Einstein manifold with $Ric(g_\mathbb{N})=\frac{1}{2}g_\mathbb{N}$.
	
%
	\smallskip	
%
	
	\smallskip	
	Very recently, Cheng and Zhou \cite{Cheng-Zhou} confirmed Cao's conjecture in dimension $n=4$, together with works of Petersen-Wylie \cite{PW}
 and Fern\'{a}ndez-L\'{o}pez and Garc\'\i a-R\'\i o \cite{FR16}.
For four-dimensional complete shrinking gradient Ricci solitons with constant scalar curvature, by \cite{FR16} the possible values of R are $\{0, 2\lambda, 3\lambda,  4\lambda\}$. Moreover, if $R=0$ or $R= 4\lambda$, then the soliton is Einstein; If $R=3\lambda$, they are finite quotient of $\mathbb{S}^{3}\times\mathbb{R}$ \cite{FR16}.
	 Cheng and Zhou \cite{Cheng-Zhou} handled the most subtle case $R=(n-2)\lambda=2\lambda$. In this case, the Ricci curvature is non-negative by the work of Fern\'{a}ndez-L\'{o}pez and Garc\'\i a-R\'\i o \cite{FR16}.
	In addition, the Riemannian curvature is bounded since Munteanu-Wang  \cite{Munteaunu-Wang}  proved four-dimensional complete shrinking gradient Ricci solitons with bounded scalar curvature has bounded Riemannian curvature.
	However, for $n$-dimensional ($n\geq 5$), we cannot even guarantee the non-negativity of Ricci curvature and the boundedness of Riemannian curvature.
	Moreover, we point out that even if the $n$-dimensional ($n\geq 5$) shrinking gradient Ricci soliton has non-negative Ricci curvature and bounded Riemannian curvature, Cao's Conjecture is still open.
	
	\smallskip	

During the paper, we always assume that $\lambda=\frac{1}{2}$.

In Cheng-Zhou's work \cite{Cheng-Zhou}, 	they applied the weighted Laplacian $\Delta_f$ to the quantity $\textrm{tr}(\text{Ric}^3)$, the trace of the tensor $\textrm{Ric}^3$, for four-dimensional gradient shrinking Ricci soliton with constant scalar curvature $2\lambda$ and then derived the following nice inequality
	\ban
	\Delta_f  \left[ f(\textrm{tr}(\text{Ric}^3)-\frac14)\right] \geq 9 f\left[ \textrm{tr}(\text{Ric}^3)-\frac14\right].
	\ean
	Using integration by parts, they concluded that $\textrm{tr}(\text{Ric}^3)-\frac14=0$
	over $M$, 
	implying that the Ricci curvature has rank 2, and thus they obtained the rigidity result.

In this short note, we provide a new proof to  Cheng-Zhou's Theorem,  and the  main Theorem is stated as follows.
	\begin{theo}[\cite{Cheng-Zhou}]\label{main}
		Suppose $(M^4, g, f)$ is a four-dimensional shrinking gradient Ricci soliton with $R=1$, then it is isometric to a finite quotient of $\mathbb{R}^2 \times \mathbb{S}^2$.
	\end{theo}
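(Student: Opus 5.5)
The plan is to study the soliton at infinity along the level sets of $f$, show that its asymptotic model is $\mathbb{R}^2\times\mathbb{S}^2$, and then push this information back into the interior with a maximum principle for $\Delta_f$.

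\textbf{Basic identities.} Since $R=1$ is constant, the standard soliton identities give
\[
\Delta_f R = R-2|\mathrm{Ric}|^2, \qquad \nabla R=2\mathrm{Ric}(\nabla f), \qquad R+|\nabla f|^2=f .
\]
Hence $|\mathrm{Ric}|^2=\frac12$, $\mathrm{Ric}(\nabla f)=0$, and $|\nabla f|^2=f-1$. By \cite{FR16}, $\mathrm{Ric}\ge 0$. By Munteanu--Wang \cite{Munteaunu-Wang}, $|Rm|\le C$. Denote the eigenvalues of $\mathrm{Ric}$ by $\lambda_i$. Then $\sum\lambda_i=1$ and $\sum\lambda_i^2=\frac12$, and $\nabla f$ is a null direction, so one eigenvalue, say $\lambda_4$, vanishes wherever $\nabla f\neq 0$. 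The remaining three satisfy $\lambda_1+\lambda_2+\lambda_3=1$ and $\lambda_1^2+\lambda_2^2+\lambda_3^2=\frac12$. The target is to prove that one more eigenvalue vanishes, giving $\mathrm{Ric}=\mathrm{diag}(\frac12,\frac12,0,0)$. Equivalently, $\mathrm{tr}(\mathrm{Ric}^3)\equiv\frac14$, or $\sigma_3(\lambda)=\lambda_1\lambda_2\lambda_3\equiv 0$.

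\textbf{Asymptotic geometry.} Curvature is bounded and $f\sim r^2/4$. Take points $x_k\to\infty$, pass to the Cheeger--Gromov limit of the pointed flows $(M,g(t),x_k)$, and use the flow $\phi_t$ generated by $\nabla f/|\nabla f|^2$. The $\mathrm{Ric}$-null, unit-norm direction $\nabla f/|\nabla f|$ becomes parallel in the limit, because
\[
\nabla^2 f=\tfrac12 g-\mathrm{Ric}
\]
and $\nabla(\nabla f/|\nabla f|)=O(f^{-1/2})$. So the limit splits a line: it is $\mathbb{R}\times N^3$, where $N^3$ is a three-dimensional shrinking soliton with $R=1$, $\mathrm{Ric}\ge0$ and bounded curvature. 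Constant scalar curvature $1=(n-2)\lambda$ in dimension three forces, by the three-dimensional classification (Perelman, Ni--Wallach, Cao--Chen--Zhu, or directly \cite{PW}/\cite{FR16} in dimension three), $N=\mathbb{R}\times\mathbb{S}^2$ or a quotient. Hence along every sequence going to infinity, $\sigma_3=\lambda_1\lambda_2\lambda_3\to 0$, and so $\mathrm{tr}(\mathrm{Ric}^3)\to\frac14$ at infinity.

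\textbf{Propagation inward (main obstacle).} I would compute $\Delta_f$ of a suitable quantity $Q$, for example
\[
Q=\mathrm{tr}(\mathrm{Ric}^3)-\tfrac14 \quad\text{or}\quad Q=\sigma_3 .
\]
This uses $\Delta_f \mathrm{Ric}=\mathrm{Ric}-2Rm(\mathrm{Ric})$, together with the fact that in four dimensions, with $\mathrm{Ric}(\nabla f)=0$, the term $Rm(\mathrm{Ric})$ can be expressed through the Ricci eigenvalues and the sectional curvatures in the $3$-plane $\nabla f^\perp$; the Weyl part is controlled via $\nabla\mathrm{Ric}$. The goal is an inequality of the form $\Delta_f Q\ge cQ$ with $c>0$, where $Q$ has a fixed sign; this is essentially the inequality Cheng--Zhou obtain for $fQ$. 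Since $Q\to0$ at infinity by the previous step, the maximum principle applies directly on exhausting domains $\{f\le s\}$, with no integrability argument. At an interior positive maximum of $Q$ we would have $0\ge\Delta_f Q\ge cQ>0$, a contradiction, so $Q\le0$; the reverse bound follows from the algebraic constraint on the eigenvalues, since $\mathrm{tr}(\mathrm{Ric}^3)\ge\frac14$ given the two trace conditions and nonnegativity. Thus $Q\equiv0$. Obtaining the right sign for the reaction terms is the delicate point. If it fails for $Q$ itself, I would try $Q$ multiplied by a power of $f$.

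\textbf{Conclusion.} Once $\mathrm{Ric}$ has constant eigenvalues $(\frac12,\frac12,0,0)$, the contracted Bianchi identity and the soliton equation imply that the kernel distribution of $\mathrm{Ric}$ is parallel. The de Rham decomposition of the universal cover then yields a product of a flat Gaussian soliton $\mathbb{R}^2$ and a surface with Gaussian curvature $\frac12$, that is, $\mathbb{S}^2$. This gives a finite quotient of $\mathbb{R}^2\times\mathbb{S}^2$, as in \cite{PW}.
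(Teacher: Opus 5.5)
\textbf{The gap is the propagation step, which you flag yourself.} The inequality $\Delta_f Q\ge cQ$ is only stated as a goal, so nothing in the proposal carries the asymptotic information inward. Citing Cheng--Zhou's inequality for $fQ$ does not repair this: it is the core of the theorem being reproved, and once you have it their weighted integration by parts finishes with no asymptotics at all.

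What is concretely missing is how to handle the reaction term $-6\sum_{i\neq k}\lambda_i^2\lambda_kK_{ik}$ in $\Delta_f\,\mathrm{tr}(\mathrm{Ric}^3)$. Your phrase ``the Weyl part is controlled via $\nabla\mathrm{Ric}$'' has to become the Ricci identity applied to $\nabla f$ with $\mathrm{Ric}(\nabla f)=0$, i.e.\ Lemma 2.1 of the paper:
\[
R(\nabla f,e_a,\nabla f,e_a)=(\nabla_{\nabla f}\mathrm{Ric})(e_a,e_a)+\lambda_a\big(\tfrac12-\lambda_a\big).
\]
Together with $\lambda_a=\sum_bK_{ab}$, this determines every $K_{ab}$ in a Ricci eigenframe. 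Carrying it out (with $\lambda_4=0$ on $\nabla f$, $\lambda_a\lambda_b=(\lambda_c-\frac12)^2$ for $\{a,b,c\}=\{1,2,3\}$, and $Q=3\lambda_1\lambda_2\lambda_3$) gives, on $\{\nabla f\neq0\}$,
\[
\Delta_fQ=\Big(10-\frac{1}{|\nabla f|^2}\Big)Q-\frac{2}{|\nabla f|^2}\langle\nabla f,\nabla Q\rangle+6\sum_{i,j,l}\lambda_i(\nabla_lR_{ij})^2 .
\]
So the computation does not give a uniform $c>0$. The coefficient of $Q$ is negative where $|\nabla f|^2<\frac1{10}$, that is, near the minimum set of $f$, which is exactly where an interior maximum of $Q$ could sit. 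The clean inequality is $\Delta_f(|\nabla f|^2Q)\ge 9|\nabla f|^2Q$, which is Cheng--Zhou's. But $|\nabla f|^2Q$ is not known to tend to $0$, so your maximum principle on exhausting domains does not apply to it. You would need a barrier such as $\epsilon f^2$, or integration by parts against $e^{-f}$, and either one makes your asymptotic step irrelevant. Your fallback of multiplying by a power of $f$ runs into precisely this. As written, the decay at infinity and the inward propagation never connect.

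\textbf{How the paper uses the decay.} It does so through a \emph{first-order} inequality, never differentiating $\mathrm{tr}(\mathrm{Ric}^3)$ twice. Since $|\mathrm{Ric}|^2\equiv\frac12$, the Bochner formula for $|\mathrm{Ric}|^2$ is the identity $|\nabla\mathrm{Ric}|^2=2\sum K_{ij}\lambda_i\lambda_j-\frac12$. Inserting Lemma 2.1 gives $|\nabla\mathrm{Ric}|^2\le-0.9(\lambda_1+\lambda_2)+K_{12}$ outside a compact set, where $K_{12}$ is, via Lemma 2.1, essentially $\langle\nabla f,\nabla(\lambda_1+\lambda_2)\rangle/f$ plus a lower-order term. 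This is integrated over the level sets $\Sigma(s)$, on which $s^{-1/2}d\sigma_{\Sigma(s)}$ is invariant under the level-set flow. Combined with $I(s)=s^{-1/2}\int_{\Sigma(s)}(\lambda_1+\lambda_2)\to0$ from the asymptotic step, it forces $\lambda_1+\lambda_2\equiv0$ and $\nabla\mathrm{Ric}\equiv0$ far out. Analyticity and de Rham then finish; this also supplies the parallelism that your last paragraph asserts without proof.

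With your $Q$, the same substitution is exact:
\[
|\nabla\mathrm{Ric}|^2=-\Big(4-\frac{2}{3|\nabla f|^2}\Big)Q+\frac{4}{3|\nabla f|^2}\langle\nabla f,\nabla Q\rangle .
\]
Hence $Q\ge0$ is nondecreasing along $\nabla f$ wherever $|\nabla f|^2>\frac16$, and your Step 2 then forces $Q\equiv0$ there.

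\textbf{Two smaller points on Step 2.} First, you need to justify that the cross-section $N^3$ of a limit along an \emph{arbitrary} divergent sequence is a shrinking soliton; Naber's splitting, as quoted in the paper, is along integral curves of $\nabla f$. This is why the paper works with the ancient $\kappa$-solution limit instead, with $R\equiv1/(-t)$, and the classification of 3D $\kappa$-solutions. Second, in dimension three $R=1$ corresponds to $(n-1)\lambda$, not $(n-2)\lambda$.
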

	
	\Rk.   Our new proof is inspired by studying the asymptotic geometry at infinity. Naber \cite{Naber} proved that a shrinking gradient Ricci soliton $(M^n, g, f)$ converge along the integral curve of $f$ to $\mathbb{R}\times \mathbb{N}^{n-1}$, where $\mathbb{N}^{n-1}$ is an $(n-1)$-dimensional shrinking gradient Ricci soliton. In our case, $\mathbb{N}$ is of dimension three, and the three-dimensional shrinking gradient Ricci soliton is completely known. Another important new thing is that some geometric quantities can be calculated clearly under constant scalar curvature condition, which was unknown before.

%
%
%
%

	%
%
	\smallskip	


In Section 3, based on the point-picking argument, we prove the Riemannian curvature is bounded. With the curvature bound, we can   prove that $\lambda_1+\lambda_2 \rightarrow 0$.  In Section 4, we prove the key estimate of $|\nabla Ric|^2$,
	 and finish the proof of  Theorem \ref{main}.
	\smallskip

	
	\section{Notations and basic formulas on gradient shrinking Ricci solitons}\label{sec2}
	
	In this section, we recall the notations and basic formulas on gradient shrinking Ricci solitons with constant scalar curvature. For details, we refer to \cite{Cao,Cheng-Zhou,Hamilton,PW}.
	
	\smallskip
	Let  $(M, g)$ be an $n$-dimensional complete gradient shrinking Ricci soliton satisfying \eqref{soliton}.
	By scaling the metric $g$,  one can normalize $\lambda$ so that $\lambda=\frac{1}{2}$. In this paper, we always assume $\lambda=\frac{1}{2}$ and the gradient shrinking Ricci soliton equation is as follows,
	\begin{align}\label{soliton'}
	\text{Ric}+\nabla^2f=\frac{1}{2} g.
	\end{align}
	
	At first we recall some basic formulas which will be used throughout the paper:
	\begin{align}\label{second bianchi}
	d R=2 Ric(\nabla f,\cdot),
	\end{align}
	\begin{align}\label{R}
	R+\Delta f=\frac{n}{2},
	\end{align}\begin{align}\label{f he tidu}
	R+|\nabla f|^2=f,
	\end{align}\begin{align}\label{lr}
	\Delta_f R=R-2|Ric|^2,
	\end{align}\begin{align}\label{elliptic equation}
	\Delta_f R_{ij}=R_{ij}-2 R_{ikjl}R_{kl}.
	\end{align}
	where $\Delta _{f}=\Delta -\left\langle \nabla f,\nabla \right\rangle $ is the weighted Laplacian, and $\Delta _{f}$ acting on the function
	is self-adjoint on the space of square integrable functions with respect to the weighted measure $e^{-f}dv.$ In general, the weighted Laplacian $\Delta_f$ acting on tensors is given by $\Delta_f=\Delta-\nabla_{\nabla f}$.
	
	\smallskip

	\smallskip

	\begin{lem}\label{K1alafa} Let $(M^4, g, f)$ be a four-dimensional shrinking gradient Ricci soliton with constant scalar curvature $1$. Then
		\begin{equation}\label{k1a}
		K_{1\alpha}=\frac{\nabla_{\nabla f}R_{\alpha\alpha} +\lambda_\alpha(\frac{1}{2}-\lambda_\alpha)}{f}
		\end{equation}	
		for $\alpha=2,  3 ,4$.	
	\end{lem}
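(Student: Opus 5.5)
The plan is to derive \eqref{k1a} from the commutation formula for third covariant derivatives of $f$, using the fact that constant scalar curvature makes $\nabla f$ a null direction of $\text{Ric}$. First I fix the frame. Since $R\equiv 1$, \eqref{second bianchi} gives $\text{Ric}(\nabla f,\cdot)=0$, so on the set where $\nabla f\neq 0$ I take $e_1=\nabla f/|\nabla f|$ and complete it to an orthonormal frame $e_2,e_3,e_4$ of eigenvectors of $\text{Ric}$, with $R_{\alpha\alpha}=\lambda_\alpha$. I read $K_{1\alpha}$ as the sectional curvature of the plane $e_1\wedge e_\alpha$, computed pointwise at a fixed point.

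Next I differentiate the soliton equation \eqref{soliton'}, which gives $\nabla_k R_{ij}=-\nabla_k\nabla_i\nabla_j f$. Commuting two covariant derivatives then yields the standard soliton identity
\[
\nabla_i R_{jk}-\nabla_j R_{ik}=R_{ijkl}\nabla_l f ,
\]
up to the sign convention for $\text{Rm}$. I contract this with $\nabla_i f$ and set $j=k=\alpha$. The left side becomes $(\nabla_{\nabla f}\text{Ric})(e_\alpha,e_\alpha)-(\nabla_{e_\alpha}\text{Ric})(\nabla f,e_\alpha)$. The right side becomes $\pm\,\text{Rm}(\nabla f,e_\alpha,e_\alpha,\nabla f)=\pm|\nabla f|^2K_{1\alpha}$, and I will fix the sign so that the curvature term appears positively.

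The key step is to evaluate $(\nabla_{e_\alpha}\text{Ric})(\nabla f,e_\alpha)$. Because $\text{Ric}(\nabla f,\cdot)\equiv0$ identically, differentiating it gives
\[
(\nabla_{e_\alpha}\text{Ric})(\nabla f,e_\alpha)=-\text{Ric}(\nabla_{e_\alpha}\nabla f,e_\alpha).
\]
By \eqref{soliton'} we have $\nabla_{e_\alpha}\nabla f=\tfrac12 e_\alpha-\text{Ric}(e_\alpha)$. Hence this term equals $-\lambda_\alpha(\tfrac12-\lambda_\alpha)$, which produces exactly the quadratic term in \eqref{k1a}. Separately, I must check that $\nabla_{\nabla f}R_{\alpha\alpha}$ in the statement means $(\nabla_{\nabla f}\text{Ric})(e_\alpha,e_\alpha)$. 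At the chosen point the two readings agree because $e_\alpha$ is an eigenvector, so the terms coming from differentiating the frame cancel.

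Solving gives $K_{1\alpha}\cdot(\text{normalizing factor})=\nabla_{\nabla f}R_{\alpha\alpha}+\lambda_\alpha(\tfrac12-\lambda_\alpha)$, where the factor is $|\nabla f|^2$ by \eqref{f he tidu}. I expect the main obstacle to be reconciling this factor with the denominator $f$ in \eqref{k1a}. With $R=1$, \eqref{f he tidu} gives $|\nabla f|^2=f-1$, not $f$. So I will check which normalization the paper uses: either $K_{1\alpha}$ is the curvature $\text{Rm}(\nabla f,e_\alpha,e_\alpha,\nabla f)$ normalized by $f$, or $f$ is shifted by the constant $R$. I will then state the identity in that form. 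The sign convention for $\text{Rm}$ in the commutation formula also needs care, so I would test it on $\mathbb{R}^2\times\mathbb{S}^2$, where both sides of \eqref{k1a} vanish for every $\alpha$.
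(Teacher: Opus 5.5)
Your proof is essentially the paper's. Both arguments contract the commutation identity for $\nabla^3 f$ (i.e.\ $\nabla_iR_{jk}-\nabla_jR_{ik}=R_{ijkl}\nabla_l f$) with $\nabla f$, and both use $\text{Ric}(\nabla f,\cdot)\equiv 0$ to turn the cross term into $-\lambda_\alpha(\tfrac12-\lambda_\alpha)$. With $K_{1\alpha}$ the sectional curvature of $e_1\wedge e_\alpha$, the Ricci identity forces the plus sign rather than letting you choose it. Your $\mathbb{R}^2\times\mathbb{S}^2$ test could not have detected a sign anyway, since both sides vanish there.

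Your normalization worry is justified. The paper simply writes ``due to $|\nabla f|^2=f$'', which contradicts $R+|\nabla f|^2=f$ when $R=1$. What this computation actually proves is the formula with denominator $|\nabla f|^2=f-1$, or equivalently the stated one after replacing $f$ by $f-1$, which leaves $\text{Ric}+\nabla^2 f=\frac12 g$ unchanged. This slip is harmless for how the lemma is used later.
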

	
	\begin{proof}
		From the Ricci identity, we have
		\begin{equation*}
		\begin{aligned}
		&-R(\nabla f, e_\alpha, \nabla f, e_\beta) \\
		=&-\left( \nabla_\beta f_{\alpha k} - \nabla_kf_{\alpha \beta}\right) f_{k}\\
		=&\left( \nabla_\beta R_{\alpha k} - \nabla_kR_{\alpha \beta}\right) f_{k}\\
		=&-  \nabla_{\nabla f} R_{\alpha \beta}+\nabla_{\beta}(R_{\alpha k} f_k)- R_{\alpha k}f_{ k\beta}\\
		=&-  \nabla_{\nabla f} R_{\alpha \beta}- R_{\alpha k}\left( \frac{1}{2}g_{ k\beta}-R_{ k\beta} \right) \\
		=&- \nabla_{\nabla f} R_{\alpha \beta}-\left( \frac{1}{2}R_{\alpha \beta}-\sum_{k=1}^4 R_{\alpha k}R_{k\beta}\right),
		\end{aligned}
		\end{equation*}
		where \eqref{second bianchi} was used in the third equality. Therefore, we see
		\begin{equation}\label{R1a1b}
		R(e_1, e_\alpha, e_1, e_\beta)=\frac{ \nabla_{\nabla f} R_{\alpha \beta}+\left( \frac{1}{2}R_{\alpha \beta}-\sum_{k=1}^4 R_{\alpha k}R_{k\beta}\right) }{f}
		\end{equation}
		due to $|\nabla f|^2=f$. \eqref{k1a} holds by setting $\beta=\alpha$ in \eqref{R1a1b}.
		This completes the proof of the lemma.
	\end{proof}
	
\begin{lem}Suppose $(M^4, g, f)$ is a shrinking gradient Ricci soliton with $R=1$, then $Ric\geq 0$.
\end{lem}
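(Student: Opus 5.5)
The plan is to show that constant scalar curvature pins down both the trace and the norm of $\text{Ric}$, and that $\nabla f$ spans its kernel. Together these leave room only for nonnegative eigenvalues, at least where $\nabla f\neq 0$; a density argument handles the remaining points.

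\emph{Step 1: trace, norm and kernel.} Since $R\equiv 1$, \eqref{lr} gives $0=\Delta_f R = R-2|Ric|^2$, so $|Ric|^2=\frac12$ everywhere. Moreover \eqref{second bianchi} gives $Ric(\nabla f,\cdot)=\frac12 dR=0$. Hence at any point where $\nabla f\neq 0$ we may take an orthonormal eigenframe $e_1=\nabla f/|\nabla f|, e_2,e_3,e_4$ of $\text{Ric}$, with eigenvalues $\lambda_1=0$ and $\lambda_2,\lambda_3,\lambda_4$ satisfying
\[
\lambda_2+\lambda_3+\lambda_4=1,\qquad \lambda_2^2+\lambda_3^2+\lambda_4^2=\tfrac12 .
\]

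\emph{Step 2: pointwise algebra.} Fix $\alpha$, say $\alpha=4$. By Cauchy--Schwarz, $\lambda_2^2+\lambda_3^2\ge \frac12(\lambda_2+\lambda_3)^2$. Substituting the two identities gives
\[
\tfrac12-\lambda_4^2\ \ge\ \tfrac12(1-\lambda_4)^2 .
\]
This simplifies to $3\lambda_4^2-2\lambda_4\le 0$, i.e. $0\le\lambda_4\le\frac23$. By symmetry the same holds for every $\lambda_\alpha$, so $\text{Ric}\ge 0$ wherever $\nabla f\neq0$.

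\emph{Step 3: critical points of $f$.} This is the only step needing care, because at a critical point the trace and norm conditions alone do not force nonnegativity in dimension four; a zero eigenvalue is no longer guaranteed. I would show that the critical set of $f$ has empty interior. If $\nabla f\equiv 0$ on an open set $U$, then $\nabla^2 f=0$ on $U$, so \eqref{soliton'} gives $\text{Ric}=\frac12 g$ there. Then $R=2$ on $U$, contradicting $R\equiv1$. (Alternatively, by \eqref{f he tidu} the critical set is $\{f=1\}$, a level set of a nonconstant analytic function.) Thus $\{\nabla f\neq0\}$ is dense. Since the condition $\text{Ric}\ge0$ is closed and $\text{Ric}$ is continuous, it extends to all of $M$.
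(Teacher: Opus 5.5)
Your proof is correct. It uses the same ingredients as the paper: trace $1$, $|Ric|^2=\frac12$ from \eqref{lr}, $Ric(\nabla f,\cdot)=0$, and Cauchy--Schwarz. The order is different, though.

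The paper applies Cauchy--Schwarz to the two largest of all four eigenvalues, $(\lambda_3+\lambda_4)^2\le 2(\lambda_3^2+\lambda_4^2)$. This gives $2(\lambda_1^2+\lambda_2^2)+(\lambda_1+\lambda_2)^2\le 2(\lambda_1+\lambda_2)$, hence $\lambda_1+\lambda_2\ge 0$ at every point of $M$ with no reference to $\nabla f$. Only afterwards does the paper invoke $Ric(\nabla f)=0$ to conclude $\lambda_1=0$ where $\nabla f\neq 0$. You instead use the kernel first to reduce to three eigenvalues, and obtain the sharper two-sided bound $0\le\lambda_\alpha\le\frac23$. The advantage of the paper's ordering is that its intermediate inequality holds everywhere, including at critical points of $f$.

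That inequality alone does not give $\lambda_1\ge 0$ at a critical point. The eigenvalue tuple $(-\tfrac16,\tfrac13,\tfrac13,\tfrac12)$ has trace $1$, squared norm $\tfrac12$, and $\lambda_1+\lambda_2=\tfrac16>0$, yet its smallest eigenvalue is negative. The paper's closing sentence (``combining with $Ric(\nabla f)=\frac12 dR=0$'') therefore silently needs exactly your Step 3. Your observation that $\{\nabla f=0\}$ has empty interior (otherwise $Ric=\frac12 g$ and $R=2$ on an open set), followed by continuity of $Ric$, completes the argument at the points the paper skips.
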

\begin{proof}
This fact is known in the literature \cite{FR16}, for completeness, we give a proof.
Let $\lambda_1\leq \lambda_2\leq \lambda_3\leq \lambda_4$ be the eigenvalues of Ricci curvature. By Cauchy-Schwarz inequality,
\ban
(\lambda_3+\lambda_4)^2\leq 2(\lambda_3^2+\lambda_4^2),
\ean
since $\lambda_3+\lambda_4=1-\lambda_1-\lambda_2$ and $\lambda_3^3+\lambda_4^2=\frac{1}{2}-\lambda_1^2-\lambda_2^2$, we have
\ban
(1-\lambda_1-\lambda_2)^2\leq 2(\frac{1}{2}-\lambda_1^2-\lambda_2^2),
\ean
hence
\ban
2(\lambda_1^2+\lambda_2^2)+(\lambda_1+\lambda_2)^2\leq 2(\lambda_1+\lambda_2),
\ean
this implies that $\lambda_1+\lambda_2$ is always nonnegative. Combining with $Ric(\nabla f)=\frac{1}{2}dR=0$ gives the desired results.
\end{proof}

	\section{Curvature bound and uniform decay of $\lambda_1+\lambda_2$}\label{sec5}
	
	In this section, based on the point-picking argument, we will prove the Riemannian curvature is bounded.
	  By the similar argument, we can also prove that $\lambda_1+\lambda_2 \rightarrow 0$.
	
	\smallskip

In order to prove the curvature bound, we recall the following result.
\begin{lem}[\cite{Chen-Zhu}]\label{point picking}
	Given a complete noncompact Riemannian manifold with unbounded curvature, we can find a sequence of point $p_j$ divergent to infinity such that for each positive integer $j$, we have $|Rm(p_j)|\geq j$ and
	\ban
	|Rm(x)|\leq 4 |Rm(p_j)|
	\ean
	for $x\in B(p_j, \frac{j}{\sqrt{|Rm(p_j)|}})$.
\end{lem}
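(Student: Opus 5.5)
The plan is the standard iterative point-picking argument run inside a compact set. The idea is to start from a point of very large curvature and repeatedly jump to a point of curvature more than four times larger in a small ball. This must stop after finitely many steps because the jumps shrink geometrically, and the point where it stops is $p_j$. Fix a base point $o\in M$. For each positive integer $j$, set $C_j=\sup_{\overline{B(o,j)}}|Rm|$, which is finite by compactness of closed balls (completeness, Hopf--Rinow). Since the curvature is unbounded, we can choose $q_0$ with $|Rm(q_0)|\geq \max\{j, C_j+1\}$.

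\textbf{Iteration.} Suppose $q_k$ has been chosen. If
\[
|Rm(x)|\leq 4|Rm(q_k)| \quad \text{for all } x\in B\Big(q_k,\tfrac{j}{\sqrt{|Rm(q_k)|}}\Big),
\]
we stop and set $p_j=q_k$. Otherwise we pick $q_{k+1}$ in that ball with $|Rm(q_{k+1})|>4|Rm(q_k)|$. By induction,
\[
|Rm(q_k)|\geq 4^k|Rm(q_0)|, \qquad d(q_k,q_{k+1})\leq \frac{j}{\sqrt{|Rm(q_k)|}}\leq \frac{j\,2^{-k}}{\sqrt{|Rm(q_0)|}} .
\]
Summing the geometric series gives $d(q_0,q_k)\leq 2j/\sqrt{|Rm(q_0)|}$ for every $k$. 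So all $q_k$ lie in a fixed closed ball around $q_0$, which is compact, and $|Rm|$ is bounded there. On the other hand $|Rm(q_k)|\to\infty$ if the process never stops. This contradiction shows the process terminates at some finite $k$, which yields $p_j$ with the required property
\[
|Rm(x)|\leq 4|Rm(p_j)| \quad \text{for } x\in B\Big(p_j,\tfrac{j}{\sqrt{|Rm(p_j)|}}\Big).
\]

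\textbf{Remaining properties.} The curvature only increases along the iteration, so $|Rm(p_j)|\geq |Rm(q_0)|\geq j$. Also $|Rm(p_j)|\geq |Rm(q_0)|> C_j$, so $p_j\notin \overline{B(o,j)}$, i.e. $d(o,p_j)>j$. Hence $p_j$ diverges to infinity.

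The only step that needs care is termination of the iteration. Without the geometric growth of $|Rm(q_k)|$ controlling the step sizes, the points could escape to infinity. The summability of $j2^{-k}|Rm(q_0)|^{-1/2}$ together with completeness is exactly what rules this out.
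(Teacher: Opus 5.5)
Your argument is correct and is essentially the standard one: the paper does not prove this lemma itself but cites Chen--Zhu, whose proof is exactly this iterative point-picking, where the growth $|Rm(q_k)|\geq 4^k|Rm(q_0)|$ makes the jumps sum to less than $2j/\sqrt{|Rm(q_0)|}$, so compactness of closed balls forces the search to stop. The auxiliary constant $C_j$ is harmless but not needed, since $|Rm(p_j)|\geq j\to\infty$ together with boundedness of $|Rm|$ on compact sets already forces $p_j\to\infty$.
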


The following backward pseudolocality Theorem will also be useful.
\begin{theo}[\cite{Li-Wang2}]\label{backward pseudolocality} For any $\alpha>0$, there is $\epsilon(n, \alpha)$ such that the following holds.

 Let $(M^n, g(t))_{t\in I}$ be a Ricci flow induced by a shrinking gradient Ricci soliton. Given $(x_0, t_0)\in M\times I$ and $r>0$, if
 \ban
 |B_{t_0}(x_0, r)|\geq \alpha r^n, \quad |Rm|\leq (\alpha r)^{-2} \quad on \quad B_{t_0}(x_0, r),
 \ean
 then
 \ban
 |Rm|\leq (\epsilon r)^{-2}  \quad on \quad P(x_0, t_0; (1-\alpha)r, -(\epsilon r)^2, 0).
 \ean
\end{theo}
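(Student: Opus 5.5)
The theorem is a citation of \cite{Li-Wang2}, so in the paper it is used as a black box. If I had to reprove it, I would argue by contradiction with a blow-up. Since the flow is induced by a shrinker, we have $g(t)=\tau(t)\,\phi_t^*g$ with $\tau(t)=1-t$. This self-similarity lets me parabolically rescale so that $t_0=0$ and $r=1$. The rescaled flow is still induced by a shrinker, with modified potential and time parameter.

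\textbf{Setting up the contradiction.} Suppose the theorem fails for some $\alpha$. Then there are soliton flows $(M_i,g_i(t))$, points $x_i$, and $\epsilon_i\to 0$ such that
\begin{itemize}
\item at $t=0$: $|B_0(x_i,1)|\ge \alpha$ and $|Rm|\le \alpha^{-2}$ on $B_0(x_i,1)$;
\item but there is a point $(y_i,s_i)$ with $y_i\in B_0(x_i,1-\alpha)$, $s_i\in[-\epsilon_i^2,0]$ and $|Rm|(y_i,s_i)>\epsilon_i^{-2}$.
\end{itemize}

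\textbf{Point-picking and the blow-up limit.} I would run a spacetime point-picking argument, in the spirit of Lemma \ref{point picking} but in parabolic neighbourhoods. This replaces $(y_i,s_i)$ by a point $(\bar y_i,\bar s_i)$ with $Q_i=|Rm|(\bar y_i,\bar s_i)\to\infty$. It also gives $|Rm|\le 4Q_i$ on a backward parabolic neighbourhood of radius $A_iQ_i^{-1/2}$, with $A_i\to\infty$, that stays inside the region where the hypothesis is still usable.

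Next I rescale by $Q_i$. Non-collapsing should come from Perelman's $\kappa$-noncollapsing, localized via a local entropy or Harnack argument so that it depends only on $n$ and $\alpha$ and not on the global entropy of the shrinker. Given that, Hamilton's compactness gives a limit flow. This limit is a nonflat solution on a backward parabolic region of infinite size, i.e. an ancient-type solution, with $|Rm|(\bar y_\infty,0)=1$.

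\textbf{Deriving the contradiction; the main obstacle.} This is where the shrinker structure has to enter. On a soliton flow one has $\partial_t g=-2\,\mathrm{Ric}=2\nabla^2 f_t-\tau^{-1}g$, so running the flow backwards is equivalent to flowing along $\nabla f$ and rescaling. Hence curvature at time $t<0$ near $x$ equals a rescaled curvature at time $0$ at $\phi(x)$, and distances change in a controlled way because $|\nabla f|^2\le f$ (formula \eqref{f he tidu}).

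Combining this with Shi's estimates on $B_0(x_i,1)$, I would show two things:
\begin{itemize}
\item backward in time the ball $B_0(x_i,1-\alpha/2)$ does not shrink by more than a factor depending only on $\alpha$;
\item the volume ratio $\alpha$ at time $0$ propagates backward along the rescaled flow.
\end{itemize}
After blowing up, the limit would then be a complete ancient solution that is $\kappa$-noncollapsed and has Euclidean volume growth, inherited from $|B_0(x_i,1)|\ge\alpha$ under the rescaling $Q_i\to\infty$. An ancient solution with Euclidean volume growth that is nonflat contradicts Perelman's asymptotic-volume-ratio theorem for $\kappa$-solutions. This requires checking that the limit has bounded nonnegative curvature operator, which is not automatic in higher dimensions.

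I expect this last step to be the main obstacle: controlling backward distance and volume distortion without a priori curvature bounds at earlier times, and getting a contradiction without curvature sign assumptions. The approach I would try first is to avoid the $\kappa$-solution classification altogether. Instead I would use the local almost-Euclidean isoperimetric/entropy estimate at time $0$ together with monotonicity of Perelman's reduced volume based at $(\bar y_i,\bar s_i)$. Since the reduced volume is close to $1$ in the almost-Euclidean region, this would force the limit to be flat, contradicting $|Rm|(\bar y_\infty,0)=1$.
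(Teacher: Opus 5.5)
The paper gives no proof of this statement. It is imported from \cite{Li-Wang2} and used only as a black box in Theorem \ref{bounded curvature}, so there is no argument in the paper to compare with. Judged on its own, your sketch has a genuine gap, and it sits exactly at the step you call the main obstacle. Nothing in it transports the hypotheses, which live on the single slice $t_0$, to the earlier times where the bad point $(\bar y_i,\bar s_i)$ lies.

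\textbf{Self-similarity does not do the transfer.} With $\Psi_t=\phi_{t_0}^{-1}\circ\phi_t$ you do have $|Rm|(x,t)\le|Rm|(\Psi_t(x),t_0)$ for $t\le t_0$. But $\Psi_t(x)$ is obtained from $x$ by flowing along $-\nabla f$ over a $g(t_0)$-distance of order $(t_0-t)\,|\nabla f|_{g(t_0)}$, and $|\nabla f|_{g(t_0)}$ is not controlled by $\alpha$ and $r$. Far out on the round cylinder shrinker the hypotheses hold at a fixed scale $r$ while $r|\nabla f|_{g(t_0)}\to\infty$. So for any fixed $\varepsilon$ the point $\Psi_{t_0-(\varepsilon r)^2}(x_0)$ leaves $B_{t_0}(x_0,r)$. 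The theorem is still true there; this mechanism just cannot see it. If $r|\nabla f|_{g(t_0)}\le C(\alpha)$ held on the ball, the theorem would follow in two lines.

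\textbf{Distances, volumes and noncollapsing.} The bound $|\nabla f|^2\le f$ controls only the displacement of single points. It does not control the distortion of distances, which is governed by $\nabla^2 f=\frac{1}{2\tau}g-\mathrm{Ric}$ at the displaced points, where nothing is known. Similarly, $R\ge 0$ does make the volume of a fixed set nondecreasing backward in time. But turning that into a volume lower bound for $g(\bar s_i)$-balls requires controlling backward expansion of distances, i.e.\ an upper Ricci bound at earlier times, which is essentially the conclusion. The same issue hits your uniform noncollapsing. The counterexamples may come from different shrinkers, so a $\kappa$ depending on each shrinker's entropy is not uniform, and localizing it is again this transfer problem.

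\textbf{The endgame does not close either.} The Euclidean volume information is attached to the slice $t_0$, where after rescaling by $Q_i$ the ball simply becomes flat. Your point-picking controls curvature only on backward neighbourhoods of $(\bar y_i,\bar s_i)$ and says nothing on $(\bar s_i,t_0]$, so that information does not pass to the limit. Even granting it, a nonflat, $\kappa$-noncollapsed ancient solution with bounded curvature and Euclidean volume growth is not contradictory: the static flow on a Ricci-flat ALE space such as Eguchi--Hanson is one. Perelman's asymptotic volume ratio theorem needs $Rm\ge 0$, and nothing in your argument provides it.

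\textbf{The fallback runs in the wrong time direction.} The reduced volume based at $(\bar y_i,\bar s_i)$ is defined and monotone only for $t<\bar s_i$. It never sees the slice $t_0>\bar s_i$ carrying the almost-Euclidean data. Good data at the earlier time with the base point later is the configuration of Perelman's forward pseudolocality, not of a backward one.

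\textbf{What is missing.} You need an $\varepsilon$-regularity statement that can be triggered by data on the final time slice. The cited work obtains this from heat-kernel/Nash-entropy methods, in the spirit of Bamler's backward pseudolocality, extended to Ricci flows induced by shrinkers. That is why $\varepsilon$ depends only on $(n,\alpha)$ and no curvature sign is needed.
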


\begin{theo}\label{bounded curvature}Suppose $(M^4, g, f)$ is a four-dimensional shrinking gradient Ricci soliton with $R=1$, then its curvature is bounded.
\end{theo}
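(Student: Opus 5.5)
We argue by contradiction via a blow-up at the points supplied by Lemma \ref{point picking}. Suppose $|Rm|$ is unbounded and choose $p_j\to\infty$ with $Q_j:=|Rm(p_j)|\geq j$ and $|Rm|\leq 4Q_j$ on $B(p_j, j/\sqrt{Q_j})$. Let $g(t)$, $t\in(-\infty,1)$, be the Ricci flow induced by the soliton, with $g(0)=g$. Rescale both space and time: $g_j(t)=Q_j\, g(t/Q_j)$. At $t=0$ the metric $g_j(0)$ has $|Rm_{g_j}|\leq 4$ on $B_{g_j(0)}(p_j,j)$ and $|Rm_{g_j}|(p_j)=1$. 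The scalar curvature rescales to $R_{g_j}=Q_j^{-1}\to0$ everywhere.

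To take a limit we need uniform local control in space-time, and we get it from Theorem \ref{backward pseudolocality}. The ingredient we must supply is a volume lower bound $|B_{g_j(0)}(p_j,r)|\geq\alpha r^n$ at a fixed scale. Shrinking solitons are $\kappa$-noncollapsed on all scales, with $\kappa$ depending on the entropy $\mu$ (Carrillo--Ni / Li--Wang). Since $|Rm|\le4$ on the ball of radius $1$, this gives $|B_{g_j(0)}(p_j,1)|\geq \alpha$ for a fixed $\alpha$. Backward pseudolocality then bounds $|Rm_{g_j}|$ on a parabolic neighborhood $B(p_j,1/2)\times[-\epsilon^2,0]$. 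Forward control on $[0,\epsilon^2]$ follows from the standard doubling-time estimate. Shi's estimates give bounds on all derivatives, and Hamilton compactness yields a subsequential limit $(M_\infty,g_\infty(t),p_\infty)$ on $t\in[-\epsilon^2,0]$. The limit is complete at $t=0$, since the radii $j\to\infty$ and the pseudolocality argument can be centered at every point of $B(p_j,j/2)$. It is noncollapsed, satisfies $|Rm|(p_\infty,0)=1$, and has $R\equiv0$.

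Now use the equation for $R$. The flow has $\partial_tR=\Delta R+2|Ric|^2$, and along the rescaled flows $R_{g_j}\to0$ identically, smoothly on compact sets. Passing to the limit gives $|Ric_{g_\infty}|\equiv0$. Equivalently, $|Ric_g|^2=\tfrac12 R-\tfrac12\Delta_fR=\tfrac12$ by \eqref{lr} on the original soliton, so $|Ric_{g_j}|^2=Q_j^{-2}/2\to0$. Hence $g_\infty(0)$ is a complete Ricci-flat $4$-manifold with bounded curvature, $|Rm|(p_\infty)=1$, and Euclidean volume growth at all scales. The last property holds because the $\kappa$-noncollapsing at the original scale passes to every scale up to $j$.

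Ricci-flat alone does not give a contradiction (e.g. ALE gravitational instantons), so we need one more ingredient. Here we use the extra rigidity available because $Ric\ge0$ has a kernel containing $\nabla f$ and, by Lemma \ref{K1alafa}, the sectional curvatures $K_{1\alpha}=O(1/f)$ as long as $\nabla_{\nabla f}R_{\alpha\alpha}$ is controlled by $|\nabla Ric|\,|\nabla f|$. On the blown-up scale, $K_{1\alpha}$ rescales like $Q_j^{-1}(|\nabla Ric|/\sqrt f+1/f)$. By Shi's estimate, $|\nabla Ric|\lesssim Q_j^{3/2}$ near $p_j$, so this quantity is $\lesssim Q_j^{1/2}/\sqrt f$, which is small provided $Q_j\ll f(p_j)$. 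I expect this to be the main obstacle. We must show the blow-up rate satisfies $Q_j=o(f(p_j))$, or else handle the regime $Q_j\gtrsim f(p_j)$ separately. A preliminary bound such as $|Rm|\le Cf$ is known for shrinkers with bounded $R$ (Munteanu--Wang type estimates). That bound covers the borderline case: if $Q_j\sim f(p_j)$, the vector field $\nabla f/|\nabla f|$ rescales to a parallel field, because $\nabla^2 f=\tfrac12 g-Ric$ is bounded. The limit then splits off a line, $g_\infty=\mathbb{R}\times N^3$ with $N^3$ Ricci-flat, hence flat, contradicting $|Rm|(p_\infty)=1$. If $Q_j\ll f(p_j)$, the same splitting argument applies with an even stronger decay. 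In both cases we contradict $|Rm|(p_\infty,0)=1$, which proves the theorem.
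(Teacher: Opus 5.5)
Your proposal is correct and follows essentially the paper's argument. You blow up at the point-picked $p_j$, use $\kappa$-noncollapsing and backward pseudolocality to get a Ricci-flat limit with $|Rm|(p_\infty)=1$, split off the $\nabla f$ direction, and use that a Ricci-flat $3$-manifold is flat. The paper gets the line from the integral curves of $\nabla f$, which are geodesics since $Ric(\nabla f)=0$, together with Cheeger--Gromoll. You instead show that the unit field of $\nabla f$ becomes parallel, which is the device of Theorem \ref{uniform decay}.

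Several parts of your write-up are unnecessary and can be dropped:
\begin{itemize}
\item the detour through $K_{1\alpha}$;
\item the case split on $Q_j$ versus $f(p_j)$;
\item the appeal to $|Rm|\le Cf$, which as a Munteanu--Wang-type estimate nearly makes the argument circular;
\item the forward-in-time control, where the doubling-time estimate is not available anyway without a global curvature bound.
\end{itemize}
The reason is that the $g_j$-unit field $N_j=Q_j^{-1/2}\nabla f/|\nabla f|$ satisfies $\bigl|\nabla N_j\bigr|_{g_j}\le C\,(Q_j f(p_j))^{-1/2}\to 0$ near $p_j$ for every $Q_j\to\infty$. This uses only $|\nabla^2 f|\le C$ and $|\nabla f|\sim\sqrt f$, so the splitting holds in every regime.
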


\Rk.   Munteanu-Wang \cite{Munteaunu-Wang} proved that four-dimensional Ricci shrinker has bounded curvature if its scalar curvature is bounded. Here in this special case we can give a simple proof.
\begin{proof}
%
%
Suppose not, by Lemma \ref{point picking}, then there exists a sequence of points $\{p_j\}_{j=1}^\infty$ divergent to infinity such that for each positive integer $j$, we have $|Rm(p_j)|\geq j$ and
	\ban
	|Rm(x)|\leq 4 |Rm(p_j)|
	\ean
	for $x\in B(p_j, \frac{j}{\sqrt{|Rm(p_j)|}})$.
	
	By the $\kappa$ noncollapsed theorem in \cite{Li-Wang} and the scalar curvature is bounded for $(M, g, f)$, we get that $\text{Vol}(B(p_j, r))\geq \kappa r^4$ for $0<r<1$ and some  $\kappa$ depending only on the soliton.

For the shrinking gradient Ricci soliton  $(M,g,f)$,
 the associated Ricci flow $(M, g(t), p_j)$ is defined on $(-\infty, 0)$, where
\[
g(t):=(-t)\phi^*_tg,\quad \frac{d\phi_t}{dt}=\frac{\nabla f}{-t}
\quad  \text{and}\quad \phi_{-1}=\mathrm{Id}.
\]
Now we can use Theorem \ref{backward pseudolocality} with $r=|Rm|(p_j)^{-\frac{1}{2}}$ to derive that
\ban
|Rm|\leq (\epsilon r)^{-2} \quad on \quad B(p_j, g(-1), \frac{j}{2}r) \times [-1-(\epsilon r)^2, -1].
\ean
Then we can apply Hamilton's compactness theorem to obtain that the rescaled manifolds $\left(B(p_j, g, \frac{j}{2\sqrt{|Rm(p_j)|}}),  |Rm(p_j)|g, p_j\right)$ converge to a smooth complete Riemannian manifold $(M_\infty^4, g_\infty, p_\infty)$ with $|Rm(p_\infty)|=1$ which is Ricci flat because $(M^4, g)$ has bounded Ricci curvature and $|Rm(p_j)|\rightarrow \infty$, moreover  $(M_\infty^4, g_\infty, p_\infty)$ has Euclidean volume growth.
	
	\smallskip
	Since the integral curves of $f$ passing through $p_j$ is a geodesic with respect to $(M, g)$, the geodesic segment of these curves contained in $B\left( p_j, g,  \frac{j}{2\sqrt{|Rm(p_j)|}}\right) $ will converge to a geodesic line in $(M_\infty, g_\infty)$, then Cheeger-Gromoll's splitting theorem \cite{Cheeger-Gromoll} implies that $M_\infty^4 =\mathbb{R}\times N^3$, where $N^3$ is Ricci flat, hence flat. This contradicts with $|Rm|(p_\infty)=1$.
\end{proof}

\begin{theo}\label{uniform decay}
	Suppose $(M^4, g, f)$ is a four-dimensional shrinking gradient Ricci soliton with $R= 1$, then $\lambda_1+\lambda_2\rightarrow 0$ at infinity.
\end{theo}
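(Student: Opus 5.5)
We argue by contradiction, using a blow-down/limit argument of the kind in the proof of Theorem \ref{bounded curvature}, but without rescaling, since the curvature is now bounded. Suppose there are points $p_j\to\infty$ and $\delta>0$ with $(\lambda_1+\lambda_2)(p_j)\geq\delta$.

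By Theorem \ref{bounded curvature} the curvature is bounded. The $\kappa$-noncollapsing result of \cite{Li-Wang} gives a uniform injectivity radius lower bound. Shi-type derivative estimates hold for the associated Ricci flow, so all $|\nabla^k Rm|$ are bounded. Hamilton's compactness theorem then lets us pass $(M,g,p_j)$, with $f$ normalized appropriately, to a smooth complete pointed limit $(M_\infty,g_\infty,p_\infty)$.

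Next we show the limit splits off a line. Since $|\nabla f|^2=f-R=f-1\to\infty$, the integral curve of $\nabla f/|\nabla f|$ through $p_j$ is a geodesic on which $|\nabla f|$ is large. Moreover $\nabla^2 f=\frac12 g-Ric$ is bounded, so the unit field $\nu=\nabla f/|\nabla f|$ satisfies $|\nabla \nu|\leq C/|\nabla f|\to 0$ on balls of fixed radius around $p_j$. Thus $\nu$ converges to a parallel unit vector field on $M_\infty$, and $M_\infty$ splits isometrically as $\mathbb{R}\times N^3$. This is Naber's picture, and the Cheeger--Gromoll splitting also applies since $Ric\ge 0$.

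We then identify the limit. The soliton equation with $\lambda=\frac12$ passes to the limit after subtracting the linear part of $f$. Concretely, $\nabla^2(f-\text{linear})$ should converge so that $N^3$ is a three-dimensional gradient shrinking soliton with scalar curvature $1$ and $Ric\geq0$. This step is the main obstacle. The first thing we would try is to avoid identifying the potential directly and work with curvature identities instead. Lemma \ref{K1alafa} gives $K_{1\alpha}=\frac{\nabla_{\nabla f}R_{\alpha\alpha}+\lambda_\alpha(\frac12-\lambda_\alpha)}{f}$. We also know $|\nabla_{\nabla f} Ric|\leq C|\nabla f|\le C\sqrt f$ from the derivative bounds, so $K_{1\alpha}\to0$ and the $\mathbb{R}$-direction is flat in the limit, consistent with the splitting.

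For the $N$-factor, we pass the limits of $R=1$, $|Ric|^2=\frac12$ (from \eqref{lr} with $R$ constant), and the elliptic equation \eqref{elliptic equation} to the limit. We expect a clean argument to use that $\Delta_f Ric$ converges to a limit of the form $\Delta Ric-\nabla_{\partial_t}Ric\cdot\infty$. This forces $\nabla_{\nu}Ric\to0$ after normalization, and then $N$ satisfies $Ric_N+\nabla^2 h=\frac12 g_N$ for the limit $h$ of $f$ minus its linear part.

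By the classification of three-dimensional shrinkers \cite{Perelman2,Ni-Wallach,Cao-Chen-Zhu}, $N$ is a quotient of $\mathbb{S}^3$, $\mathbb{R}\times\mathbb{S}^2$, or $\mathbb{R}^3$. Scalar curvature $1$ rules out $\mathbb{R}^3$ (where $R=0$) and $\mathbb{S}^3$ (where $R=\frac32$). Hence $M_\infty=\mathbb{R}^2\times\mathbb{S}^2$ up to quotient, whose Ricci eigenvalues are $0,0,\frac12,\frac12$. Therefore $(\lambda_1+\lambda_2)(p_j)\to0$, which contradicts $(\lambda_1+\lambda_2)(p_j)\geq\delta$.

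An alternative route, if identifying the soliton structure on $N$ proves delicate, is purely algebraic. On $M_\infty=\mathbb{R}\times N$ we have $\lambda_1=0$ in the $\mathbb{R}$ direction. On $N$, the conditions $R_N=1$ and $|Ric_N|^2=\frac12$ with $Ric_N\geq0$ do not by themselves force $\lambda_2=0$. So we would still need \eqref{elliptic equation} or the $\mathrm{tr}(Ric^3)$ identity to conclude, and we expect that step to be the hardest part of the argument.
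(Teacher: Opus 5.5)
There is a genuine gap, and it is the step you flag yourself: nothing in the proposal identifies $N^3$. The setup is fine: bounded curvature, $\kappa$-noncollapsing, Shi estimates, a smooth limit of $(M,g,p_j)$, and the splitting $M_\infty=\mathbb{R}\times N^3$ from $|\nabla\nu|\le C/|\nabla f|$. The problem is that the mechanism you propose for making $N$ a gradient shrinker cannot work on a single time-slice limit.

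Write $\nabla f=|\nabla f|\nu$ and use $Ric(\nabla f)=0$. Then $\nabla|\nabla f|=\frac12\nu$ and
\[
\nabla^2 f=\tfrac12\,\nu\otimes\nu+|\nabla f|\,\nabla\nu,\qquad |\nabla f|\,\nabla\nu=\tfrac12 g^{\top}-Ric .
\]
So everything that should become $Ric_N+\nabla^2h=\frac12 g_N$ sits in the term $|\nabla f|\nabla\nu$. This is an $\infty\cdot 0$ product: the rescaled second fundamental form of the level sets.

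No "$f$ minus its linear part" can see that term. The level sets of $f$ converge to the slices $\{t\}\times N$, and $f$ is constant on them. Hence subtracting from $f$ constants or functions of the $\mathbb{R}$-coordinate leaves something whose limit, if it exists, depends on $t$ only (essentially $t^2/4$, with Hessian $\frac12dt^2$). Its $N$-part Hessian vanishes, so it cannot be a potential for $N$. Recovering a genuine potential would require controlling $\nu$ at rate $1/|\nabla f|$, which Cheeger--Gromov convergence does not provide.

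Likewise, \eqref{elliptic equation} gives $\nabla_\nu Ric=O(1/|\nabla f|)$, but that only yields $\mathbb{R}$-invariance, which you already have. The bounded tensor $\nabla_{\nabla f}Ric$ does converge, but nothing shows its limit has the form $\nabla_{\nabla h}Ric_N$. As you note, the algebra alone is also insufficient. The constraints $\mu_i\ge0$, $\sum\mu_i=1$, $\sum\mu_i^2=\frac12$ describe the circle inscribed in the simplex. That circle touches $\{\mu_1=0\}$ only at $(0,\frac12,\frac12)$, so it does not force $\mu_1=0$.

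The missing idea is to work with the flow rather than the time slice, which removes any need for a soliton structure on $N$. The paper passes to the limit of the whole associated Ricci flow $g(t)=(-t)\phi_t^*g$ based at $q_i$. This is ancient, $\kappa$-noncollapsed, and of bounded curvature. The functions $(f-f(q_i))/|\nabla f(q_i)|$ have Hessian tending to $0$, so the limit flow splits as $\mathbb{R}\times N^3(t)$. Here $N^3(t)$ is a three-dimensional ancient $\kappa$-solution whose scalar curvature is constant in space, namely $1/(-t)$, inherited from $R\equiv1$.

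The classification of such solutions (Brendle; Angenent--Brendle--Daskalopoulos--\v{S}e\v{s}um; Brendle--Daskalopoulos--\v{S}e\v{s}um; Bamler--Kleiner) then excludes sphere quotients, Perelman's ovals and the Bryant soliton. Only $(\mathbb{R}\times\mathbb{S}^2)/\Gamma$ remains, which gives $\lambda_1+\lambda_2=0$ at $q_\infty$.

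If you want to keep your route through the classification of 3D shrinkers, you would have to import a result such as Naber's. The paper recalls it for convergence along integral curves of $\nabla f$, and your heuristic does not prove it.
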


\begin{proof}
Suppose on the contrary, then there is a sequence of points
$q_i\to\infty$ but $(\lambda_1+\lambda_2)(q_i)\geq \delta$ for some $\delta>0$.
For   four-dimensional shrinker $(M,g,f)$ with
bounded curvature, the associated Ricci flow $(M, g(t), q_i)$ defined on $(-\infty, 0)$, where
\[
g(t):=(-t)\phi^*_tg,\quad \frac{d\phi_t}{dt}=\frac{\nabla f}{-t}
\quad  \text{and}\quad \phi_{-1}=\mathrm{Id},
\]
is $\kappa$-noncollapsed, where $\kappa=\kappa(n, V_f(M))$. By Hamilton's
Cheeger-Gromov compactness theorem, the associated Ricci flow
sub-converges to an ancient $\kappa$-noncollapsed solution
$(M_{\infty}, g_{\infty}(t), q_{\infty})$.  Consider a sequence of functions
\[
f_i(x):=\frac{f(x)-f(q_i)}{|\nabla f(q_i)|},
\]
which satisfies
\[
|\nabla f_i(q_i)|=1 \quad\text{and} \quad
|\mathrm{Hess}\,f_i|=\frac{|\tfrac{1}{2}g-Ric(g)|}{|\nabla f(q_i)|}
\]
on $(M^n,g(t))$. Since $|\nabla f(x)|\to \infty$ as $x\to\infty$,
$|\nabla f(q_i)|\to\infty$ as $q_i\to\infty$. Combining this with the fact that the curvature is bounded, we deduce that $|\mathrm{Hess}\, f_i|\to 0$ uniformly as
$i\to\infty$. Hence, along the convergence of $(M^n, g(t), q_i)$, the sequence
of functions $f_i(x)$ smoothly converges to a limit $f_\infty$ satisfying
$|\nabla f_\infty(q_{\infty})|=1$ and $\mathrm{Hess}\, f_\infty\equiv 0$
on $(M_{\infty}, g_{\infty}(t))$. This implies that
$(M_{\infty}, g_{\infty}(t), q_{\infty})$ isometrically splits
as $(\mathbb{R}\times \mathbb{N}^3, g_\mathbb{R}+g_{\mathbb{N}^3(t)})$, where $(\mathbb{N}^3, g_{\mathbb{N}^3(t)})$
is a three-dimensional ancient  $\kappa$-solution with $R=1$ at $t=-1$.
By the recent works of  \cite{Angenent-Brendle-Daskalopoulos-Sesum,Brendle,Brendle-Daskalopoulos-Sesum}
and \cite{Bamler-Kleiner} for the compact case, there are four possibilities for $(\mathbb{N}^3, g_{\mathbb{N}^3(t)})$.

(1).  $\mathbb{N}^3$ is isometric to $\mathbb{S}^3/\Gamma$ with $R=1$ at time $t=-1$. Hence the corresponding Ricci flow solution is $g_{\mathbb{N}^3(t)}=(2-4t)g_{\mathbb{S}^3}$, which contradicts the fact that the time span of $g_{\mathbb{N}^3(t)}$ should be $(-\infty, 0)$ from the convergence process.

(2).  $\mathbb{N}^3$ is isometric to a finite quotient of the Type II ancient solution
constructed by Perelman, this contradicts $R=1$.

(3).  $\mathbb{N}^3$ is isometric to the Bryant soliton, this also contradicts  $R=1$.

(4).  $\mathbb{N}^3$ is isometric to $(\mathbb{R}\times \mathbb{S}^2)/\Gamma$,
 then $(\lambda_1+\lambda_2)(q_\infty)=0$.

In conclusion, $(\lambda_1+\lambda_2)\to 0$ uniformly at infinity.

\end{proof}

%
%

\section{key estimate of $|\nabla Ric|^2$}
In this section, we establish a key estimate for $|\nabla Ric|^2$ on a $4$-dimensional shrinking gradient Ricci soliton with $R=1$.  More precisely, we have the following theorem.

\begin{theo}Suppose $(M^4, g, f)$ is a shrinking gradient Ricci soliton with $R=1$, then $|\nabla Ric|^2\leq -0.9(\lambda_1+\lambda_2)+K_{12}$  on $M\setminus D(a)$ for some large $a>0$.
\end{theo}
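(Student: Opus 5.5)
The plan is to turn the constraint $|Ric|^2\equiv\frac12$ into an exact algebraic identity for $|\nabla Ric|^2$, and then expand that identity in an adapted orthonormal frame. Throughout I take $e_1=\nabla f/|\nabla f|$, which is defined outside a compact set. Since $Ric(\nabla f)=\frac12 dR=0$, the vector $e_1$ is an eigenvector with $\lambda_1=0$. I complete it to an eigenframe $e_2,e_3,e_4$ with $0\le\lambda_2\le\lambda_3\le\lambda_4$, and write $K_{ij}$ for the sectional curvature of the plane $e_i\wedge e_j$.

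\emph{Step 1: an exact formula.} With $R=1$, \eqref{lr} gives $|Ric|^2=\frac12$. Hence
\[
0=\Delta_f|Ric|^2=2|\nabla Ric|^2+2\langle Ric,\Delta_f Ric\rangle .
\]
Inserting \eqref{elliptic equation} yields
\[
|\nabla Ric|^2=2R_{ikjl}R_{ij}R_{kl}-\tfrac12=2\sum_{i\neq k}\lambda_i\lambda_kK_{ik}-\tfrac12 .
\]
Because $\lambda_1=0$, this equals
\[
4(\lambda_2\lambda_3K_{23}+\lambda_2\lambda_4K_{24}+\lambda_3\lambda_4K_{34})-\tfrac12 .
\]

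\emph{Step 2: rewrite in terms of $\lambda_2$, $K_{12}$ and $K_{1\alpha}$.} I use $\lambda_i=\sum_k K_{ik}$, $\lambda_3+\lambda_4=1-\lambda_2$ and $\lambda_3^2+\lambda_4^2=\frac12-\lambda_2^2$. These give
\[
\lambda_3\lambda_4=\tfrac14-\lambda_2+\lambda_2^2,\qquad (\lambda_3-\lambda_4)^2=2\lambda_2-3\lambda_2^2 .
\]
Solving for $K_{34}$ gives
\[
2K_{34}=1-2\lambda_2+K_{12}-K_{13}-K_{14}.
\]
The key cancellation is $K_{12}+K_{13}+K_{14}=\lambda_1=0$, so $K_{13}+K_{14}=-K_{12}$. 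This is consistent with Lemma \ref{K1alafa}, since $\sum_\alpha[\nabla_{\nabla f}R_{\alpha\alpha}+\lambda_\alpha(\frac12-\lambda_\alpha)]=0$. Substituting, the $4\lambda_3\lambda_4K_{34}$ term becomes $\frac12-3\lambda_2+K_{12}$ up to errors. Writing $K_{23}+K_{24}=\lambda_2-K_{12}$, the terms $4\lambda_2(\lambda_3K_{23}+\lambda_4K_{24})$ are of size $2\lambda_2(\lambda_2-K_{12})$ plus $O(\lambda_2|\lambda_3-\lambda_4|\,|Rm|)$. Altogether I expect
\[
|\nabla Ric|^2=-3\lambda_2+K_{12}+E,
\]
where $E$ is bounded by a constant times $\lambda_2^2+\lambda_2|K_{12}|+\lambda_2^{3/2}+|\lambda_3-\lambda_4|\,|K_{13}-K_{14}|$.

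\emph{Step 3: absorb the errors.} By Theorem \ref{bounded curvature} the curvature is bounded, and Shi-type estimates for the associated Ricci flow then bound $|\nabla Ric|$. By Theorem \ref{uniform decay}, $\lambda_2=\lambda_1+\lambda_2\to0$. So outside $D(a)$ the terms $\lambda_2^2$, $\lambda_2^{3/2}$ and $\lambda_2|K_{12}|$ are at most $2\lambda_2$. That leaves $-0.9\lambda_2$ with room to spare, and gives the target bound $-0.9(\lambda_1+\lambda_2)+K_{12}$.

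The main obstacle is the cross term $|\lambda_3-\lambda_4|\,|K_{13}-K_{14}|$, which is of size $\sqrt{\lambda_2}\,|K_{13}-K_{14}|$. Lemma \ref{K1alafa} with $|\nabla f|^2\sim f$ only gives $|K_{1\alpha}|\le C/\sqrt f$, and a crude AM-GM leaves a residual $C/f$ that $\lambda_2$ cannot absorb. To handle it, I will expand $K_{13}-K_{14}$ through Lemma \ref{K1alafa}. The algebraic part is $(\lambda_3-\lambda_4)(\frac12-\lambda_3-\lambda_4)/f=(\lambda_3-\lambda_4)\lambda_2/f$, which is harmless. The derivative part $\nabla_{\nabla f}(R_{33}-R_{44})$ I will try to control by differentiating the two identities $\lambda_3+\lambda_4=1-\lambda_2$ and $\lambda_3^2+\lambda_4^2=\frac12-\lambda_2^2$ along $\nabla f$. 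This gives
\[
(\lambda_3-\lambda_4)\nabla_{\nabla f}(\lambda_3-\lambda_4)=\nabla_{\nabla f}\lambda_2\,(1-3\lambda_2),
\]
so the product $(\lambda_3-\lambda_4)\nabla_{\nabla f}(\lambda_3-\lambda_4)/f$ equals $(1-3\lambda_2)\nabla_{\nabla f}\lambda_2/f$. By Lemma \ref{K1alafa}, $\nabla_{\nabla f}\lambda_2/f=K_{12}+O(\lambda_2/f)$, so this becomes $K_{12}(1+O(\lambda_2))+O(\lambda_2/f)$. The cross term is therefore re-expressed through $K_{12}$ and $\lambda_2$ rather than estimated crudely. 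The remaining task is to check that these contributions keep the coefficient of $K_{12}$ equal to $1$ and only lower the coefficient of $\lambda_2$ from $-3$ to something still below $-0.9$.
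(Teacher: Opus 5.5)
Your Steps 1--3 already prove the statement. The ``main obstacle'' in your last paragraph does not exist, and because you chase it the proposal stops at an unfinished ``remaining task''. Carry out Step 2 exactly, using $2K_{34}=1-2\lambda_2+2K_{12}$, $2\lambda_3\lambda_4=\frac12-2\lambda_2+2\lambda_2^2$, and $\lambda_3K_{23}+\lambda_4K_{24}=\frac{\lambda_3+\lambda_4}{2}(K_{23}+K_{24})+\frac{\lambda_3-\lambda_4}{2}(K_{23}-K_{24})$. This gives the identity
\[
|\nabla Ric|^2=-3\lambda_2+K_{12}+8\lambda_2^2-6\lambda_2K_{12}-6\lambda_2^3+6\lambda_2^2K_{12}+2\lambda_2(\lambda_3-\lambda_4)(K_{23}-K_{24}).
\]
$K_{13}$ and $K_{14}$ enter only through $K_{34}$, that is, through their sum $-K_{12}$. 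The only cross term carries a factor $\lambda_2$, and it is exactly your $\lambda_2^{3/2}$ term, since $(\lambda_3-\lambda_4)^2=2\lambda_2-3\lambda_2^2$. Your own Step 2 bookkeeping already shows this. So $|E|\le C(\lambda_2^2+\lambda_2^{3/2}+\lambda_2|K_{12}|)$, with no $|\lambda_3-\lambda_4|\,|K_{13}-K_{14}|$ term, and the proof closes after Step 3.

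In Step 3 you should state explicitly that $K_{12}\to0$, which follows from Lemma \ref{K1alafa} together with the Shi bound on $|\nabla Ric|$. This is needed because the term $-6\lambda_2K_{12}$ cannot be absorbed into $2.1\lambda_2$ using bounded curvature alone.

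Delete the last paragraph rather than repair it. By your own computation, $(\lambda_3-\lambda_4)(K_{13}-K_{14})=(1-3\lambda_2)K_{12}+O(\lambda_2/f)$. So a term of that shape, if it really appeared and were estimated in absolute value, would add roughly $|K_{12}|$ and destroy the coefficient $1$ on $K_{12}$ that the statement requires. There is also a slip in that paragraph: $\frac12-\lambda_3-\lambda_4=\lambda_2-\frac12$, not $\lambda_2$.

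Compared with the paper: the paper shares your Step 1 and the relation $2K_{34}=2K_{12}+1-2(\lambda_1+\lambda_2)$, but differs in two estimates.
\begin{itemize}
\item For $4\lambda_2(\lambda_3K_{23}+\lambda_4K_{24})$, the paper writes $o(1)(\lambda_1+\lambda_2)$, asserting $K_{23},K_{24}\to0$ by rerunning the blow-down argument of Theorem \ref{uniform decay}. You need only bounded curvature and the algebraic identity for $(\lambda_3-\lambda_4)^2$.
\item For $\lambda_3\lambda_4$, the paper uses the AM--GM bound $2(\lambda_3-\frac12)(\lambda_4-\frac12)\le\lambda_1+\lambda_2$, multiplied by $2K_{34}$, which is positive near infinity. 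Your exact formula shows this quantity actually equals $-\lambda_2+2\lambda_2^2$. So the paper gives away $2\lambda_2$ and ends at $-(1-o(1))(\lambda_1+\lambda_2)+K_{12}$, while you get $-(3-o(1))\lambda_2+K_{12}$.
\end{itemize}
Your route is therefore more self-contained and quantitatively sharper. The paper's is less computational but leans on an extra compactness argument.
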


\begin{proof}
For  simplicity, throughout the proof, we denote $K_{ij}=R(e_i, e_j, e_i, e_j)$.
Following similar argument as Theorem \ref{uniform decay}, one sees  that $K_{ij}\rightarrow 0$ except for $K_{34}$. By $(2.7)$, we have
\ban
\frac{1}{2}\Delta_f |Ric|^2=|Ric|^2+|\nabla Ric|^2-2K_{ij}\lambda_i\lambda_j.
\ean
This jointly with $(2.6)$ yields
\ban
&&|\nabla Ric|^2\\
&&=2K_{ij}\lambda_i\lambda_j-|Ric|^2\\
&&=4K_{23}\lambda_2\lambda_3+4K_{24}\lambda_2\lambda_4+4K_{34}\lambda_3\lambda_4-\frac{1}{2}\\
&&=o(1)(\lambda_1+\lambda_2)+4K_{34}\lambda_3\lambda_4-\frac{1}{2}
\ean
Since
\ban
&& \lambda_1=K_{12}+K_{13}+K_{14}\\
&&\lambda_2=K_{21}+K_{23}+K_{24}\\
&&\lambda_3=K_{31}+K_{32}+K_{34}\\
&&\lambda_4=K_{41}+K_{42}+K_{43},
\ean
we get
\ban
2K_{34}&&=2K_{12}+\lambda_3+\lambda_4-\lambda_1-\lambda_2\\
&&=2K_{12}+1-2(\lambda_1+\lambda_2).
\ean
Combined with $\lambda_3\lambda_4=(\lambda_3-\frac{1}{2})(\lambda_4-\frac{1}{2})+\frac{1}{2}(\lambda_3+\lambda_4)-\frac{1}{4}$, we have
\ban
&&|\nabla Ric|^2\\
&&=o(1)(\lambda_1+\lambda_2)+2\left(2K_{12}+1-2(\lambda_1+\lambda_2)\right)\\
&&\quad \times \left((\lambda_3-\frac{1}{2})(\lambda_4-\frac{1}{2})+\frac{1}{2}(\lambda_3+\lambda_4)-\frac{1}{4}\right)-\frac{1}{2}.
\ean
Notice that
\ban
&&2(\lambda_3-\frac{1}{2})(\lambda_4-\frac{1}{2})\\
&&\leq (\lambda_3-\frac{1}{2})^2+(\lambda_4-\frac{1}{2})^2\\
&&= \lambda_3^2+\lambda_4^2-(\lambda_3+\lambda_4)+\frac{1}{2}\\
&&=\frac{1}{2}-\lambda_1^2-\lambda_2^2-(1-\lambda_1-\lambda_2)-\frac{1}{2}\\
&&\leq \lambda_1+\lambda_2.
\ean
then we can obtain
\ban
&&|\nabla Ric|^2\\
&&=o(1)(\lambda_1+\lambda_2)+(2|K_{12}|+1-2(\lambda_1+\lambda_2))(\lambda_1+\lambda_2)\\
&&\quad (2K_{12}+1-2(\lambda_1+\lambda_2))(1-(\lambda_1+\lambda_2))-\frac{1}{2}(2K_{12}+1-2(\lambda_1+\lambda_2))-\frac{1}{2}\\
&&=o(1)(\lambda_1+\lambda_2)-(\lambda_1+\lambda_2)+K_{12}.
\ean
So $|\nabla Ric|^2\leq -0.9(\lambda_1+\lambda_2)+K_{12}$  on $M\setminus D(a)$ for some large $a>0$.

\end{proof}
As a consequence of Theorem $4.1$, we get the following corollary.
\begin{coro}\label{to be used}Suppose $(M^4, g, f)$ is a shrinking gradient Ricci soliton with $R=1$, then
 \ban
&&\int_{\Sigma(s)}|\nabla Ric|^2\\
&&\leq  -0.8\int_{\Sigma(s)}(\lambda_1+\lambda_2)+\frac{1}{s}\int_{\Sigma(s)} \nabla f \cdot\nabla(\lambda_1+\lambda_2)
\ean
for almost everywhere $s\geq a$, where $a>0$ is large enough.
\end{coro}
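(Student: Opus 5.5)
The plan is to integrate the pointwise inequality of Theorem 4.1 over the level set $\Sigma(s)=\{f=s\}$, after rewriting $K_{12}$ with Lemma \ref{K1alafa}. Since $f=s$ on $\Sigma(s)$, the factor $1/f$ in that lemma becomes the constant $1/s$.

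\textbf{Choice of frame.} Since $R=1$ is constant, (\ref{second bianchi}) gives $Ric(\nabla f,\cdot)=0$. Combined with $Ric\geq 0$ (Lemma 2.2), this shows that $\nabla f$ lies in the kernel of $Ric$. For $s\ge a>1$ we have $|\nabla f|^2=f-1>0$ on $\Sigma(s)$, so every such $s$ is a regular value. We take $e_1=\nabla f/|\nabla f|$, so $\lambda_1=0$ and $\lambda_1+\lambda_2=\lambda_2$. Here $\lambda_2$ is characterized by
\[
\lambda_2(x)=\min\{Ric(v,v): v\perp \nabla f,\ |v|=1\},
\]
and we let $e_2$ be a minimizing unit vector, so that $R_{22}=\lambda_2$. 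Lemma \ref{K1alafa} with $\alpha=2$ then gives
\[
K_{12}=\frac{\nabla_{\nabla f}R_{22}+\lambda_2(\tfrac12-\lambda_2)}{s}\quad\text{on }\Sigma(s).
\]

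\textbf{Replacing $\nabla_{\nabla f}R_{22}$.} The function $\lambda_2$ is a min of smooth functions, hence locally Lipschitz and differentiable almost everywhere. At a point $x$ of differentiability, extend $e_2$ by parallel transport along the integral curve of $\nabla f$ through $x$; this curve is a geodesic up to parametrization. The extension stays orthogonal to $\nabla f$, because $\nabla_{\nabla f}\nabla f=\tfrac12\nabla f$ (since $Ric(\nabla f)=0$). Therefore $Ric(e_2,e_2)\ge\lambda_2$ along the curve, with equality at $x$. Comparing one-sided derivatives in the directions $\pm\nabla f$ forces
\[
\nabla_{\nabla f}R_{22}=\nabla f\cdot\nabla\lambda_2 \quad\text{at } x.
\]
By the coarea formula with $|\nabla f|>0$, for almost every $s$ this identity holds $\mathcal H^3$-a.e. on $\Sigma(s)$. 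This is exactly where the phrase ``almost everywhere $s$'' in the statement comes from.

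\textbf{Conclusion.} Since $\lambda_2\ge 0$, we have $\lambda_2(\tfrac12-\lambda_2)/s\le \lambda_2/(2s)\le 0.1\,\lambda_2$ once $s\ge 5$. Hence, at a.e.\ point of $\Sigma(s)$,
\[
|\nabla Ric|^2\le -0.9\lambda_2+K_{12}\le -0.8(\lambda_1+\lambda_2)+\frac1s\nabla f\cdot\nabla(\lambda_1+\lambda_2).
\]
We enlarge $a$ so that $a\ge 5$ and Theorem 4.1 applies, and then integrate over $\Sigma(s)$. Integrability is clear, since $\Sigma(s)$ is compact ($f$ is proper) and the curvature and its derivative are bounded by Theorem \ref{bounded curvature}.

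The step I expect to be the main obstacle is justifying $\nabla_{\nabla f}R_{22}=\nabla f\cdot\nabla\lambda_2$ where $\lambda_2$ is not a simple eigenvalue, i.e.\ where $\lambda_2=\lambda_3$. The variational (min) characterization together with the one-sided comparison along the $\nabla f$-curve is the approach I would try first, since it only uses differentiability of $\lambda_2$ at the point.
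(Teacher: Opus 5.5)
Your proposal is correct and follows the paper's proof: integrate the pointwise bound of Theorem 4.1 over $\Sigma(s)$ after substituting Lemma \ref{K1alafa} for $K_{12}$, then absorb $\lambda_2(\tfrac12-\lambda_2)/s$ into $0.1(\lambda_1+\lambda_2)$ for large $s$. Your min-characterization argument along the $\nabla f$-curve, together with the coarea formula, is a rigorous version of what the paper only sketches in its Remark, namely the identity $\nabla_{\nabla f}R_{22}=\langle\nabla f,\nabla(\lambda_1+\lambda_2)\rangle$ at points of differentiability and the meaning of ``almost every $s$''.
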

\Rk.    $\lambda_1+\lambda_2$   is only Lipschitz continuous and differentiable almost everywhere.                             Here                    $\langle\nabla (\lambda_1+\lambda_2), \nabla f \rangle$ can also be understood as follows: At any $p\in M$, choose $\{e_1, e_2\}$ such that $\{e_1, e_2\}$ are the eigenvectors corresponding to the eigenvalues $\{\lambda_1, \lambda_2\}$, where $\lambda_1\leq  \lambda_2$ are the smallest two eigenvalues of Ricci curvature, take parallel  transport along all the geodesics starting from $p$, then we obtain two smooth vector fields $\{e_1, e_2\}$  in a neighborhood of $p$ such that $e_1(p)=e_1$, $e_2(p)=e_2$. It is easy to check that $(\nabla_{\nabla f}Ric)(e_1, e_1)+(\nabla_{\nabla f}Ric)(e_2, e_2)=\nabla f\cdot \left(Ric(e_1, e_1)+Ric(e_2, e_2)\right) =\langle\nabla f ,  (\lambda_1+\lambda_2)\rangle$ if $\lambda_1+\lambda_2$ is differentiable at $p$.
\begin{proof}
By Lemma \ref{K1alafa}, we know $K_{12}=\frac{\nabla f\cdot\nabla(\lambda_1+\lambda_2)}{f}+\frac{\lambda_2(\frac{1}{2}-\lambda_2)}{f}$.
hence
\ban
&&\int_{\Sigma(s)}|\nabla Ric|^2\\
&&\leq -0.9\int_{\Sigma(s)}(\lambda_1+\lambda_2)+\frac{1}{s}\int_{\Sigma(s)} \nabla f \cdot\nabla(\lambda_1+\lambda_2)+\frac{1}{s}\int_{\Sigma(s)}\lambda_2(\frac{1}{2}-\lambda_2)\\
&&\leq  -0.8\int_{\Sigma(s)}(\lambda_1+\lambda_2)+\frac{1}{s}\int_{\Sigma(s)} \nabla f \cdot\nabla(\lambda_1+\lambda_2)
\ean
for $s\geq a$, where $a>0$ is large enough.

\end{proof}

\begin{prop}\label{other direction}
	Let $(M^4, g, f)$ be a four-dimensional shrinking gradient Ricci soliton with $R=1$.  then
\ban
\int_{\Sigma(s)} \langle\nabla (\lambda_1+\lambda_2), \nabla f \rangle d\sigma_{\Sigma(s)}\leq 0
\ean
	for sufficiently large almost everywhere $s$.
\end{prop}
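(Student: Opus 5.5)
Set $u=\lambda_1+\lambda_2$, which is Lipschitz, nonnegative (by the lemma giving $Ric\geq 0$, since $\lambda_1=0$ because $Ric(\nabla f)=0$), and tends to $0$ at infinity by Theorem \ref{uniform decay}. Write $D(s)=\{f\leq s\}$ and $\Sigma(s)=\partial D(s)$; the outward unit normal on $\Sigma(s)$ is $\nabla f/|\nabla f|$ with $|\nabla f|=\sqrt{f-1}=\sqrt{s-1}$ constant there. Hence
\[
\int_{\Sigma(s)}\langle\nabla u,\nabla f\rangle\,d\sigma=\sqrt{s-1}\,\frac{d}{ds}\Big(\text{flux}\Big)\quad\text{and}\quad \int_{\Sigma(s)}\langle \nabla u,\nabla f\rangle=\frac{d}{ds}\int_{D(s)}\langle\nabla u,\nabla f\rangle\,|\nabla f|^{0}\ \text{(by coarea)} ,
\]
and the cleanest form I will use is the co-area identity
\[
F(s):=\int_{\Sigma(s)} u\,|\nabla f|\,d\sigma ,\qquad F'(s)=\int_{\Sigma(s)}\Big(\frac{\langle\nabla u,\nabla f\rangle}{|\nabla f|}+u\,\frac{\Delta f}{|\nabla f|}\Big)d\sigma,
\]
valid for a.e. $s$ (derivative of $\int_{D(s)}\mathrm{div}(u\nabla f)$). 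Since $\Delta f=2-R=1$, this gives $\int_{\Sigma(s)}\langle\nabla u,\nabla f\rangle = \sqrt{s-1}\,F'(s)-\int_{\Sigma(s)}u$.

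The plan is to argue by contradiction using Corollary \ref{to be used} as a differential inequality. Let $G(s)=\int_{\Sigma(s)}\langle\nabla u,\nabla f\rangle$ and $U(s)=\int_{\Sigma(s)}u\geq 0$. From the Corollary, $0\leq\int_{\Sigma(s)}|\nabla Ric|^2\leq -0.8\,U(s)+G(s)/s$, so wherever the claim fails one actually has $G(s)\geq 0.8\,s\,U(s)$, and more precisely $G>0$. Feeding this into the co-area identity gives $\sqrt{s-1}F'(s)=G+U\geq G$, i.e. $F$ is increasing with $F'(s)\geq c\,\sqrt{s}\,U(s)=c\,F(s)$ (as $F=\sqrt{s-1}\,U$). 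Thus on any interval where $G>0$, $F$ grows at least exponentially.

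The key step is to show the set where $G(s)>0$ cannot persist, and in fact that $G\leq 0$ for all large $s$. I would first show that the inequality $G>0$, once true on a set of large $s$, propagates: the Corollary forces $G\geq 0.8 sU\geq0$ always (because the left side is nonnegative), so in fact $G(s)\geq 0$ for all large $s$, hence $F$ is nondecreasing and satisfies $F'\geq cF$ for all large $s$. Then either $F\equiv0$ beyond some $s_0$ (so $u\equiv 0$ there and $G=0$, which proves the claim) or $F(s)\geq F(s_1)e^{c(s-s_1)}$. The exponential growth is incompatible with the geometry: $u\leq 1$, and the shrinker has Euclidean volume growth bounds (Cao–Zhou: $|\Sigma(s)|\leq C s^{3/2}$), so $F(s)\leq C s^{2}$, a polynomial bound. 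This contradiction yields $F\equiv 0$ for large $s$, whence $u=0$ and $G\equiv 0\le 0$ eventually.

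The main obstacle is the regularity: $u$ is only Lipschitz, so the co-area/divergence manipulation and the a.e. differentiation of $F$ must be justified (absolute continuity of $s\mapsto\int_{D(s)}\mathrm{div}(u\nabla f)$, using that $|\nabla f|$ has no critical points for $s>1$ and $u\in W^{1,\infty}_{loc}$). I would handle it by approximating $u$ by smooth functions in $W^{1,p}_{loc}$ and passing to the limit in the integrated form of the identity, only differentiating at the end.
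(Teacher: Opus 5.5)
Your proof is correct, but it closes the argument differently from the paper.

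\textbf{The paper's route.} It flows level sets along $\nabla f/|\nabla f|^2$ and finds that $s^{-1/2}d\sigma_{\Sigma(s)}$ is invariant. Hence $I(s)=s^{-1/2}\int_{\Sigma(s)}(\lambda_1+\lambda_2)$ satisfies $I'(s)=s^{-3/2}G(s)$. It then uses Theorem \ref{uniform decay} to get $I(s)\to 0$. If $G>0$ somewhere, $I'<0$ at some later $d$, and there Corollary \ref{to be used} would make $\int_{\Sigma(d)}|\nabla Ric|^2$ negative. So the paper needs the Corollary only to force $G\ge 0$; the decay $\lambda_1+\lambda_2\to 0$ does the rest.

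\textbf{Your route.} You get the derivative formula from the divergence theorem and the coarea formula. This settles a.e.\ differentiability and absolute continuity of $F$ for Lipschitz $u$ more cleanly than the paper's differentiation under the integral, so no smoothing is needed. You then use the quantitative content $G\ge 0.8\,sU$ to get $F'\ge 0.8F$. A polynomial bound on $F$ then suffices, and Theorem \ref{uniform decay} is not needed at this step.

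\textbf{What each buys.} The strict constant is essential to your argument. With only $G\ge 0$ you would get $F'\ge F/(s-1)$, i.e.\ linear growth, which contradicts nothing. Conversely, the paper's argument would survive with coefficient $0$ in the Corollary, but it needs the decay. Your argument also yields $u\equiv 0$ and $G\equiv 0$ on $\{f\ge a\}$ directly, which the paper only extracts afterwards in the proof of Theorem \ref{main}.

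\textbf{Minor points.}
\begin{itemize}
\item The second identity in your first display is wrong as written, since coarea gives $\frac{d}{ds}\int_{D(s)}h=\int_{\Sigma(s)}h/|\nabla f|$. You never use it, so nothing breaks.
\item Cao--Zhou is unnecessary. Since $\Delta f=1$, the divergence theorem gives $\mathrm{Vol}(D(s))=\sqrt{s-1}\,|\Sigma(s)|$. Together with coarea this yields $\mathrm{Vol}(D(s))=C(s-1)$, and hence $F(s)\le C(s-1)$.
\item You correctly use $|\nabla f|^2=s-1$ on $\Sigma(s)$, where the paper writes $s$. This does not affect either argument.
\end{itemize}
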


\begin{proof}
	For the purpose, we consider the following one parameter of diffeomorphisms,
	\begin{equation*}
		\begin{aligned}
			\begin{cases}
				&\frac{\partial F}{\partial s}=\frac{\nabla f}{|\nabla f|^2},\\
				&F(x, a)=x \in \Sigma(a).
			\end{cases}
		\end{aligned}
	\end{equation*}
	Then $\frac{\partial f}{\partial s}=\langle\nabla f, \frac{\nabla f}{|\nabla f|^2} \rangle=1$,
	and the advantage of $F$ is that it maps level set of $f$ to another level set, in particular $f(F(x, s))=s$.
	
	\smallskip
	Suppose $\{x_1, x_2, x_3\}$ are local coordinate chart of $\Sigma(a)$, on $\Sigma(s)$, let  $g(s)(\frac{\partial }{\partial x_i}, \frac{\partial}{\partial x_j}):=g(\frac{\partial F}{\partial x_i}, \frac{\partial F}{\partial x_j})$, $d\sigma_{\Sigma(s)}=\sqrt{det(g_{ij})}dx$, where $dx=dx_1\wedge dx_2\wedge dx_3$.
	Next we compute the derivatives of $d\sigma_{\Sigma(s)}$.
	\begin{align*}
		&\frac{\partial}{\partial s}d\sigma_{\Sigma(s)}=\frac{\partial}{\partial s}\sqrt{det(g_{ij})}dx\\
		=&\frac{1}{2}\cdot 2 g^{ij}\langle \nabla_{\frac{\partial F}{\partial x_i}}\frac{\partial F}{\partial s}, \frac{\partial F}{\partial x_j}\rangle d\sigma_{\Sigma(s)}\\
		=& g^{ij}\langle \nabla_{\frac{\partial F}{\partial x_i}}\frac{\nabla f}{|\nabla f|^2}, \frac{\partial F}{\partial x_j}\rangle d\sigma_{\Sigma(s)}\\
		=&\frac{1}{|\nabla f|^2}g^{ij}\nabla^2 f(\frac{\partial F}{\partial x_i}, \frac{\partial F}{\partial x_j})d\sigma_{\Sigma(s)}\\
		=&\frac{1}{|\nabla f|^2} g^{ij}\left(\frac{1}{2}g(\frac{\partial F}{\partial x_i}, \frac{\partial F}{\partial x_j})-Ric(\frac{\partial F}{\partial x_i}, \frac{\partial F}{\partial x_j})\right)d\sigma_{\Sigma(s)}\\
		=&\frac{1}{|\nabla f|^2} (\frac{3}{2}-\frac{2}{2})d\sigma_{\Sigma(s)}=\frac{1}{2s}d\sigma_{\Sigma(s)}.
	\end{align*}
	Hence, it is easy to check that
	\ban
	\frac{\partial}{\partial s}\left(\frac{1}{\sqrt{s}}d\sigma_{\Sigma(s)}\right)=\left(-\frac{1}{2}s^{-\frac{3}{2}}+\frac{1}{\sqrt{s}}\frac{1}{2s}\right) d\sigma_{\Sigma(s)}=0.
	\ean
	Define a function
	\ban
	I(s)=\int_{\Sigma(s)} (\lambda_1+\lambda_2)\cdot \frac{1}{\sqrt{s}} d\sigma_{\Sigma(s)},
	\ean
since $\lambda_1+\lambda_2$  is Lipschitz continuous,
$I(s)$ is also Lipschitz continuous, hence differentiable almost everywhere,
and then we can compute the derivative of $I(s)$ as follows:
	\ban
	I'(s)&&=\frac{d}{ds}\int_{\Sigma(s)} (\lambda_1+\lambda_2)\cdot \frac{1}{\sqrt{s}} d\sigma_{\Sigma(s)}\\
	&&=\int_{\Sigma(s)} \langle \nabla (\lambda_1+\lambda_2), \frac{\nabla f}{|\nabla f|^2}\rangle \frac{1}{\sqrt{s}}d\sigma_{\Sigma(s)}\\
	&&+\int_{\Sigma(s)} (\lambda_1+\lambda_2)\frac{\partial}{\partial s}\left(\frac{1}{\sqrt{s}}d\sigma_{\Sigma(s)}\right)\\
	&&=\int_{\Sigma(s)} \langle \nabla (\lambda_1+\lambda_2), \frac{\nabla f}{|\nabla f|^2}\rangle \frac{1}{\sqrt{s}}d\sigma_{\Sigma(s)}\\
	&&=\frac{1}{s^{\frac{3}{2}}}\int_{\Sigma(s)} \langle \nabla (\lambda_1+\lambda_2), \nabla f\rangle d\sigma_{\Sigma(s)},
	\ean
	where we have  used  $|\nabla f|^2=s$  in the last equality.
	
	\smallskip
	Moreover, since $I(s)$ tends to zero as $s\rightarrow\infty$ by Theorem \ref{uniform decay}, there exists a sufficiently large $b>a$ such that $I'(b)\leq 0$, i.e.
	\ban
	\int_{\Sigma(b)} \langle \nabla(\lambda_1+\lambda_2),\nabla f \rangle d\sigma_{\Sigma(b)}\leq 0.
	\ean
	
Finally, we claim that
\ban
\int_{\Sigma(s)} \langle\nabla (\lambda_1+\lambda_2), \nabla f \rangle d\sigma_{\Sigma(s)}\leq 0
\ean
for almost everywhere $s$ with $s\geq b$.
In fact, if not, assume there is some $c>b$, such that
\ban
\int_{\Sigma(c)} \langle\nabla (\lambda_1+\lambda_2), \nabla f \rangle d\sigma_{\Sigma(c)}> 0.
\ean
Similarly, because $I(s)$ tends to zero as $s\rightarrow\infty$, there exists sufficiently large $d>c$ such that $I'(d)< 0$, i.e.,
	\ban
	\int_{\Sigma(d)} \langle\nabla (\lambda_1+\lambda_2), \nabla f \rangle d\sigma_{\Sigma(d)}< 0.
	\ean
Then it follows from Corollary \ref{to be used} that
	\ban
&&\int_{\Sigma(d)}|\nabla Ric|^2 d\sigma_{\Sigma(d)}\\
&\leq&
-0.8\int_{\Sigma(d)} (\lambda_1+\lambda_2)\,  d\sigma_{\Sigma(d)}
+ \frac{1}{d}\int_{\Sigma(d)} \langle\nabla (\lambda_1+\lambda_2), \nabla f \rangle d\sigma_{\Sigma(d)}\\
&<&0,
\ean	
which is a contradiction.	
So, We finish  the proof of Proposition \ref{other direction}.
\end{proof}

Now we can finish the proof of Theorem \ref{main}.

\begin{proof}
We  apply Corollary \ref{to be used} and Proposition  \ref{other direction} to infer
\ban
\int_{\Sigma(s)}|\nabla Ric|^2 d\sigma_{\Sigma(s)}=0 \,\,\, \text{and}
\int_{\Sigma(s)} (\lambda_1+\lambda_2)\, d\sigma_{\Sigma(s)}=0
\ean
for sufficiently large almost everywhere $s$ due to the nonnegativity of $\lambda_1+\lambda_2$. Thus  $\nabla Ric =0$ and $\lambda_1+\lambda_2=0$  on $M \setminus D(s)$ by the continuity of $\nabla Ric$ and $\lambda_1+\lambda_2$.
Since Ricci curvature is nonnegative, we get
 \ban
 \lambda_1=\lambda_2\equiv 0 \,\,\, \text{and}\,\,\, \lambda_3=\lambda_4\equiv \frac{1}{2}.
 \ean
Due to the analyticity of gradient Ricci soliton,  $\nabla Ric=0$ on $M$.
 Finally, De Rham's splitting theorem implies that $(M^4, g, f)$ is isometric to a finite quotient of  $\mathbb{R}^2\times\mathbb{ N}^2$, where $\mathbb{N}^2$ is a two-dimensional Einstein manifold with Einstein constant $\frac{1}{2}$, has to be isometric to $\mathbb{S}^2$.
We have completed the proof of Theorem \ref{main}.

\end{proof}

\end{document}